\newtheorem{Assumption}{Assumption}[section]
\newcommand{\lbl}[1]{\label{#1}}
\def\eps{\varepsilon}
\def\cond{\operatorname{cond}}
\def\Th{\mathcal
{T}_h}
\def\T{\mathcal{T}}
\def\R{\Bbb{R}}
\def\bp{\mathbf{p}}
\def\bA{\mathbf{A}}
\def\bB{\mathbf{B}}
\def\bD{\mathbf{D}}
\def\bn{\mathbf{n}}
\def\bv{\mathbf{v}}
\def\bfp{\mathbf{p}}
\title{On surface meshes induced by level set functions}
\author{Maxim A. Olshanskii\thanks{Department of Mathematics, University of Houston, Houston, Texas 77204-3008 and Dept. Mechanics and Mathematics, Moscow State
University, Moscow 119899
(molshan@math.uh.edu).}
\and Arnold Reusken\thanks{Institut f\"ur Geometrie und Praktische  Mathematik, RWTH-Aachen
University, D-52056 Aachen, Germany (reusken@igpm.rwth-aachen.de,xu@igpm.rwth-aachen.de).}
\and Xianmin Xu$^\dag$\thanks{LSEC, Institute of Computational
  Mathematics and Scientific/Engineering Computing,
  NCMIS, AMSS, Chinese Academy of Sciences, Beijing 100190, China (xmxu@lsec.cc.ac.cn).}
}
\begin{document}

\maketitle

\renewcommand{\thefootnote}{\arabic{footnote}}
\begin{abstract}
The zero level set of a  continuous piecewise-affine function with respect to a
consistent tetrahedral subdivision of a domain in  $\mathbb{R}^3$ is
a piecewise-planar hyper-surface. We prove that if a family of consistent tetrahedral subdivions satisfies the \emph{minimum angle condition}, then  after a simple postprocessing
this zero level set   becomes a consistent surface triangulation
which satisfies the \emph{maximum angle condition}.
We treat an application of this result to the numerical solution of PDEs posed on surfaces, using a $P_1$ finite element space on such a surface triangulation. For this finite element space we derive optimal interpolation error bounds.
We  prove that the diagonally scaled mass matrix is well-conditioned, uniformly with respect to $h$. Furthermore, the
 issue of conditioning of the stiffness matrix is addressed.
\end{abstract}

\begin{keywords}surface finite elements,  level set function, surface triangulation, maximum angle condition
\end{keywords}


\pagestyle{myheadings}
\thispagestyle{plain}
\markboth{M. A. Olshanskii, A. Reusken, X. Xu  }{On surface meshes induced by level set functions}
\section{Introduction}

Surface triangulations occur in, for example, visualization, shape optimization,
surface restoration  and in  applications where differential equations posed
on surfaces are treated numerically.
Hence,  properties of surface triangulations such as shape regularity and angle conditions are of interest. For example, angle conditions are closely related to approximation properties and stability of corresponding finite
elements \cite{Babuska76,Ciarlet}.

 In this article, we are interested in the properties of a surface triangulation
 if one considers the zero level of a continuous piecewise-affine function with respect to a
consistent tetrahedral subdivision of a domain in $\mathbb{R}^3$.  The zero level of a  piecewise-affine function is
a piecewise-planar hyper-surface
consisting of triangles and quadrilaterals. Each  quadrilateral can be divided into two triangles in such a
way that the resulting surface triangulation satisfies the following property proved in this paper:
if the volume tetrahedral subdivision satisfies a  minimum angle condition, then the corresponding surface triangulation satisfies
a maximum angle condition. We show that the maximum angle occuring in the surface triangulation can be bounded by a constant $\phi_{\max} < \pi$ that depends only on a stability constant for the family of tetrahedral subdivisions.

The paper also discusses a few implications of this property for the numerical solution of surface partial differential equations.
Numerical methods for surface PDEs are studied in e.g., \cite{Dziuk07,Demlow06,Dziuk88,Demlow09,OlshanskiiReusken08,XuZhao03}.  We derive optimal
approximation properties of $P_1$ finite element functions with respect to the surface
triangulation and  a uniform bound for the condition number of the scaled mass matrix. We also show that the condition number of the (scaled) stiffness matrix can be very large and is sensitive to the
distribution of the vertices of tetrahedra
close to the surface.  Some numerical examples illustrate the analysis of the paper.

\section{Surface meshes induced by  regular bulk triangulations} \label{sectSUPG}
Consider a smooth surface $\Gamma$ in three dimensional space. For simplicity, we assume that $\Gamma$ is connected and has no boundary.
Let $\Omega\subset\mathbb{R}^3$ be a bulk domain which contains $\Gamma$.
Let $\{\mathcal{T}_h\}_{h>0}$ be a family of tetrahedral triangulations of
the domain $\Omega$. These triangulations
are assumed to be regular, consistent and stable, cf.~\cite{Ciarlet}. To simplify the presentation, we assume that this family of triangulations is \emph{quasi-uniform}. The latter assumption, however, is not essential for our analysis.

We assume that for each $\mathcal{T}_h$  an approximation  of
$\Gamma$, denoted by $\Gamma_h$, is given which  is a connected $C^{0,1}$ surface without  boundary. In our analysis we assume $\Gamma_h$ to be consistent with $\mathcal{T}_h$ is the sense as explained in the following definition.
\begin{definition} \label{defconsistent} For any
tetrahedron $S_T\in\mathcal{T}_h$ such that $\mathrm{meas}_2(S_T\cap\Gamma_h)>0$ define
$T=S_T\cap\Gamma_h$. If every $T$  is a \textit{planar},
then the surface approximation  $\Gamma_h$ is called \emph{consistent with the outer triangulation} $\mathcal{T}_h$.
\end{definition}

If $\Gamma_h$ is consistent with $\mathcal{T}_h$, then every segment $T=S_T\cap\Gamma_h$ is either a triangle or a quadrilateral. Each quadrilateral segment can be divided into two triangles, so we may assume that every $T$ is a triangle.

Let $\mathcal{F}_h$ be the set of all  triangular segments $T$, then $\Gamma_h$ can
be decomposed as
\begin{equation} \label{defgammah}
 \Gamma_h=\bigcup\limits_{T\in\mathcal{F}_h} T.
\end{equation}
\begin{Assumption}
 In the remainder of this paper we assume that $\Gamma_h$ is a connected $C^{0,1}$ surface without  boundary that is consistent with the outer triangulation $\mathcal{T}_h$.
\end{Assumption}

The most prominent example of such a surface triangulation is obtained in the context of level set techniques. Assume that $\Gamma$ is represented as the zero level of a level set function $\phi$ and that $\phi_h$ is a continuous linear finite element approximation on the outer  tetrahedral triangulation $\mathcal{T}_h$. Then if we define $\Gamma_h$ to be the zero level of $\phi_h$ then $\Gamma_h$ consists of piecewise planar segments and is consistent with $\mathcal{T}_h$. As an example,
 consider a sphere $\Gamma$, represented as the zero level  of its signed distance function. For $\phi_h$ we take  the piecewise linear nodal interpolation of this distance function on a uniform tetrahedral
triangulation $\mathcal{T}_h$ of a domain that contains $\Gamma$. The zero level of this interpolant defines $\Gamma_h$ and is illustrated in Fig.~\ref{fig:tri}.
\begin{figure}[ht!]
\begin{center}
\centering
  \includegraphics[width=0.45\textwidth]{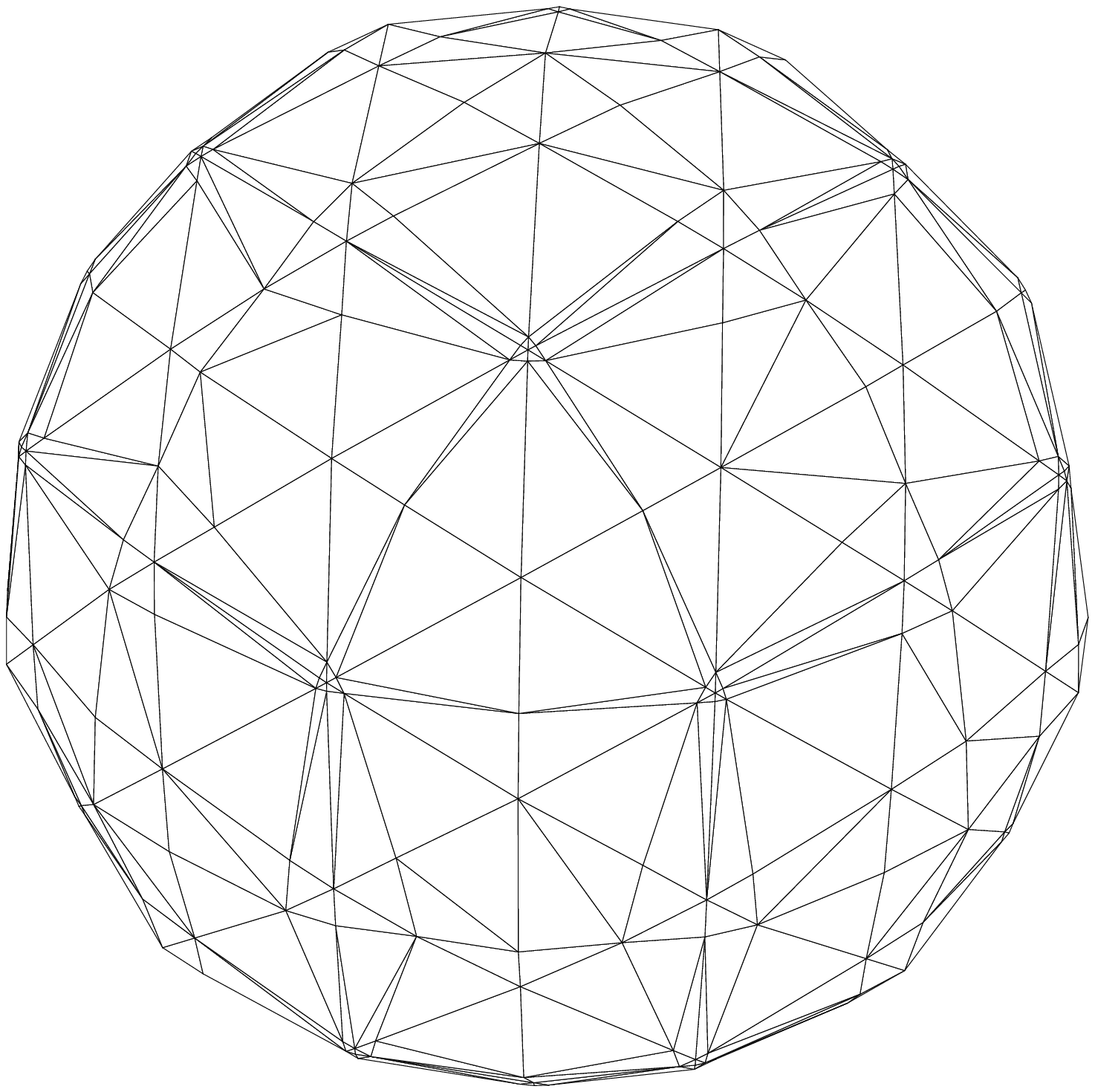}
  \includegraphics[width=0.45\textwidth]{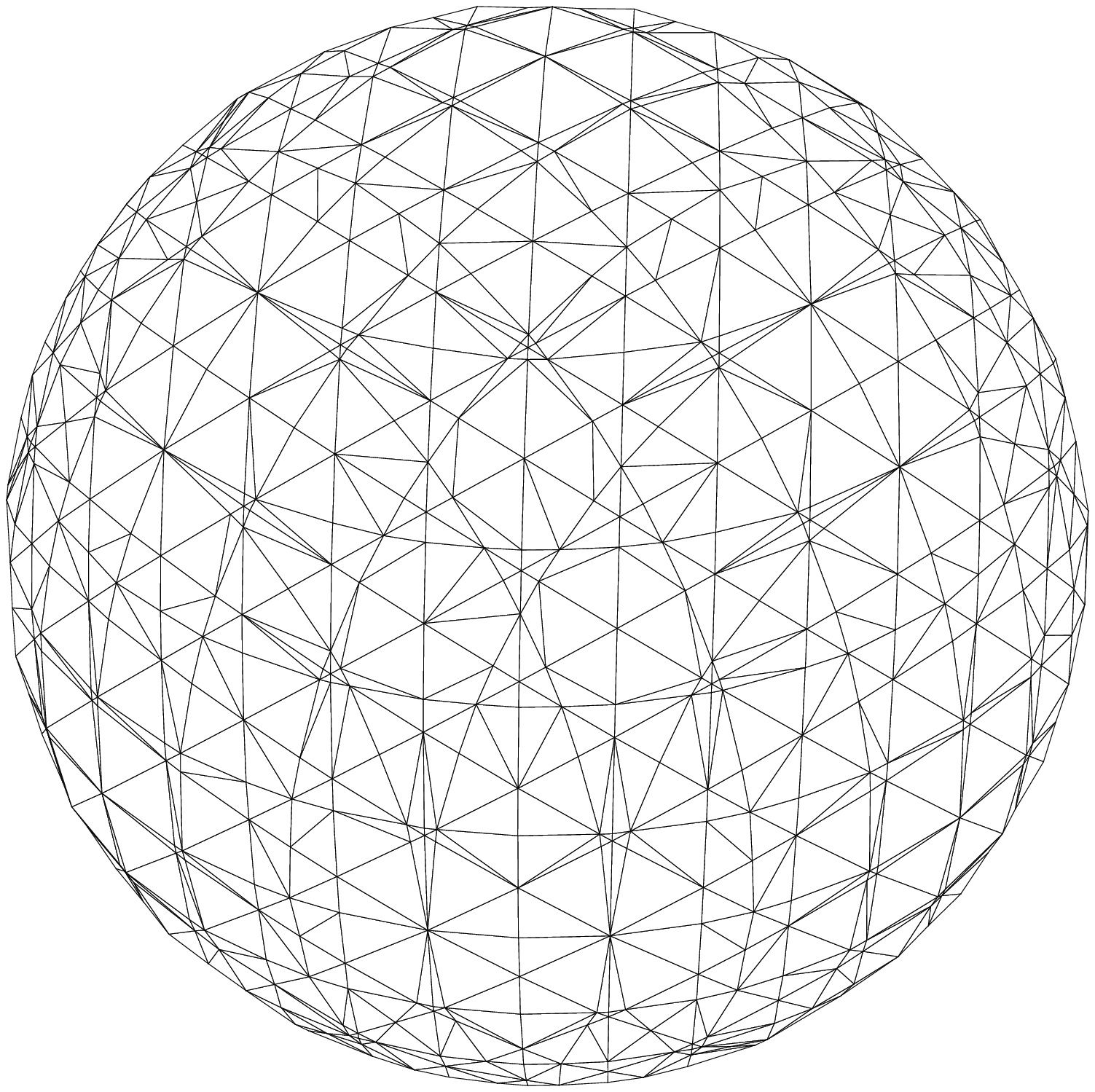}
\caption{Approximate interface $\Gamma_h$ for an example of a sphere, resulting from a coarse tetrahedral triangulation (left) and after one refinement (right).}
\label{fig:tri}
\end{center}
\end{figure}

In the setting of level set methods, such surface triangulations induced by a finite element level set function on a regular outer tetrahedral triangulation are very natural and easy to construct. A surface triangulation $\Gamma_h$ that is consistent with the outer triangulation may be the result of another method than the level set method. In the remainder we only need that  $\Gamma_h$ is consistent with the outer triangulation and not that it is generated by a level set technique.

Note that the triangulation $\mathcal{F}_h$  is \emph{not} necessarily  regular, i.e. elements $T\in\mathcal{F}_h $ may have
\emph{very small inner angles} and the \emph{size of neighboring triangles can vary strongly}, cf.~Fig.~\ref{fig:tri}.
In the next section we prove that, provided each quadrilateral is divided into two triangles properly, the induced surface triangulation is such that the \textit{maximal angle condition} \cite{Babuska76} is satisfied.

\section{The maximal angle condition} \label{geometric}
The family of outer tetrahedral triangulations $\{\mathcal{T}_h\}_{h >0}$  is assumed to be regular, i.e.,  it contains no hanging nodes and the following stability property holds:
\begin{equation}\label{min_angle}
\sup_{h>0}\sup_{S\in\mathcal{T}_h}\rho(S)/r(S)~\le~\alpha<\infty,
\end{equation}
where  $\rho(S)$ and $r(S)$ are the diameters of the smallest ball that contains $S$ and the largest ball contained in $S$, respectively. The stability property implies that the family of tetrahedral triangulations satisfies a \emph{minimum}  (and thus also maximum) \emph{angle condition}: there exists $\theta_{\min} >0$
with
\begin{equation} \label{intangle}
 \frac{\pi}{2}>\theta_{\min}\ge c(\alpha)>0,
\end{equation}
such that all inner angles of all sides of $S \in \mathcal{T}_h$ and all angles between edges of $S$ and their opposite  side are in the interval $[\theta_{\min}, \pi -\theta_{\min}]$.
The constant $c(\alpha)$  depends only on $\alpha$ from \eqref{min_angle}.

Although the surface mesh $\Gamma_h$ induced by $\mathcal{T}_h$ can be  highly shape irregular, the following lemma shows that
 a \textit{maximum angle} property holds.

\begin{lemma}\lbl{theo:2.1}
Assume an outer triangulation $\mathcal{T}_h$ from the regular family $\{\mathcal{T}_h\}_{h >0}$ and let $\Gamma_h$ be consistent with $\mathcal{T}_h$.
There exists $\phi_{\min} >0$, depending only on $\alpha$ from \eqref{min_angle}, such that for every $S\in\mathcal{T}_h$
the following holds:
\begin{itemize}
 \item[a)] if $T=S\cap\Gamma_h$ is a triangular element, then
\begin{equation}\label{max_angle} 0<\phi_{i,T}\leq \pi-\phi_{\min}\ \ i=1,2,3,
\end{equation}
holds, where $\phi_{i,T}$ are the inner angles of the element $T$. 
\item[b)]
if $T=S\cap\Gamma_h$ is a  quadrilateral  element, then
\begin{equation}\label{max_angle_b}
\phi_{i,T}\geq  \phi_{\min},\ \ i=1,2,3,4,
\end{equation}
holds, where $\phi_{i,T}$ are the inner angles of the element $T$.
\end{itemize}										
\end{lemma}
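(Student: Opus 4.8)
The plan is to split according to how the cutting plane separates the four vertices of $S$ (equivalently, by the sign pattern of the level set values): a one-versus-three split produces the triangular cross-section of part a), while a two-versus-two split produces the quadrilateral of part b). Throughout I would use two consequences of the stability \eqref{min_angle} and the induced bound \eqref{intangle}: first, the three planar (face) angles at each vertex of $S$ lie in $[\theta_{\min},\pi-\theta_{\min}]$; and second, shape regularity forces the three unit edge vectors emanating from any fixed vertex to be \emph{uniformly} linearly independent, in the sense that the component of any one of them orthogonal to the span of the other two is bounded below by a constant $\delta=\delta(\alpha)>0$ (equivalently, the Gram determinant, i.e.\ the solid angle at that vertex, is bounded below).

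For part a), the triangle $T=S\cap\Gamma_h$ has its three vertices on the three edges of $S$ issuing from a single vertex $A$. Placing $A$ at the origin, I would write the vertices as $a\hat u$, $b\hat v$, $c\hat w$ with unit edge vectors $\hat u,\hat v,\hat w$ and $a,b,c>0$, so that $\angle(\hat u,\hat v)$, $\angle(\hat v,\hat w)$, $\angle(\hat w,\hat u)$ are exactly the face angles of $S$ at $A$ and hence lie in $[\theta_{\min},\pi-\theta_{\min}]$. Denote $P=a\hat u$, $Q=b\hat v$, $R=c\hat w$. The largest inner angle occurs at the vertex opposite the longest side; say this is $P$, with $\angle QPR$. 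I would bound $\cos\angle QPR$ away from $-1$ via a dichotomy. If $a$ is small relative to $b$ and $c$, then $P$ is close to $A$ and the rays $PQ$, $PR$ approach the directions $\hat v,\hat w$, so $\angle QPR$ is close to $\angle(\hat v,\hat w)\le\pi-\theta_{\min}$, safely below $\pi$. If instead $a$ is comparable to $\max(b,c)$, the transversality constant $\delta$ forces the altitude from $P$ (the distance of $a\hat u$ to the line through $b\hat v$ and $c\hat w$, which lies in the plane $\mathrm{span}(\hat v,\hat w)$) to be at least $a\delta$, hence a fixed fraction of $|QR|\le b+c$; this precludes $P$ from lying near the segment $QR$, so $\angle QPR$ stays bounded away from $\pi$. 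Quantifying both regimes and running the same argument for whichever vertex carries the largest angle yields \eqref{max_angle} with an explicit $\phi_{\min}=\phi_{\min}(\alpha)$.

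For part b), the cross-section $T$ is convex, being the intersection of the convex tetrahedron with a plane, and its four vertices lie on four edges of $S$, with consecutive sides lying in consecutive faces of $S$. At each vertex the two incident sides of $T$ lie in the two faces sharing the corresponding edge $e$ of $S$, so the interior angle there is an angle between two rays, one in each of two faces whose dihedral angle is bounded away from $0$ and $\pi$. I would show this angle cannot degenerate to $0$: such degeneration would force the two incident sides to become parallel, which for sides living in distinct faces meeting along $e$ requires either a vanishing dihedral angle along $e$ or both sides aligning with $e$ itself, and both possibilities are excluded by \eqref{intangle} and the transversality bound $\delta$. This gives \eqref{max_angle_b}. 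Alternatively, cutting $T$ along a diagonal into two triangles of the type treated in part a) relates the four quadrilateral angles to triangle angles controlled by the maximum-angle estimate, from which the lower bound $\phi_{\min}$ follows.

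The main obstacle is the non-compactness and scale invariance of the parameter set for $(a,b,c)$ in the triangular case: the inner angles are invariant under common scaling, and a near-$\pi$ angle must be excluded in all degenerate limits, namely when two of $a,b,c$ tend to $0$ (two vertices merging into $A$) or when the ratios run to $0$ or $\infty$. The decisive point is the observation that a vertex can produce a near-$\pi$ angle only when its own edge-distance is small relative to the other two, but in precisely that regime the angle converges to a face angle between the other two edges, which \eqref{intangle} already keeps below $\pi-\theta_{\min}$; the transversality constant $\delta$ covers the complementary regime. Making this balance quantitative, uniformly over the admissible $\hat u,\hat v,\hat w$ and over all $(a,b,c)$, is the technical heart of the argument, and the same transversality input drives the lower bound in the quadrilateral case.
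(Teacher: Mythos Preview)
Your approach differs substantially from the paper's. There the argument is by direct trigonometric construction: for a), assuming $\phi=\angle BCD>\pi-\theta_{\min}$, one drops perpendiculars from $C$ to the opposite segment and to the opposite face, uses the sine rule to bound the face angles $\angle ADB$ and $\angle ABD$ of $S$ in terms of $\sin\phi$, and from $\angle DAB\le\pi-\theta_{\min}$ obtains an inequality $\theta_{\min}\le g(\phi)$ with $g$ monotone decreasing on $(\tfrac\pi2,\pi)$ and $g(\pi)=0$, yielding an explicit $\phi_0<\pi$. Part b) is handled by the same perpendicular-drop device. Your transversality/dichotomy route can also be made to work for a), but the split must be on $a$ versus $\min(b,c)$, not $\max(b,c)$: since $|QR|$ is the longest side, the altitude bound gives $\sin\angle QPR\ge a\delta/\min(|PQ|,|PR|)\ge a\delta/(a+\min(b,c))$, which is bounded below once $a\gtrsim\min(b,c)$, and the complementary regime $a\ll\min(b,c)$ is exactly where $\angle QPR\to\angle(\hat v,\hat w)\le\pi-\theta_{\min}$. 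The paper's method produces an effective constant; yours is softer but structurally clean.

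For part b) there is a genuine gap. You correctly reduce a vanishing interior angle to both incident sides aligning with the shared edge $e=V_1V_3$, but the exclusion of that configuration does \emph{not} follow from \eqref{intangle} or from $\delta$. The missing observation is combinatorial: the two quadrilateral vertices adjacent to $P_{13}\in V_1V_3$ lie on edges attached to the \emph{opposite} endpoints of $e$ (one on $V_1V_4$, the other on $V_2V_3$), so when the two sides align with $e$ they necessarily point toward $V_1$ and toward $V_3$ respectively, giving an angle near $\pi$, not $0$. Without this orientation argument the step is unjustified. Your alternative---cutting the quadrilateral along a diagonal and invoking part a)---fails outright: the resulting triangles have vertices on three tetrahedral edges that do \emph{not} emanate from a common vertex, so the hypothesis of part a) is not met.
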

\begin{proof}
 Let $\theta_{\min}$ be the minimal angle bound from \eqref{intangle}. Take $S\in\mathcal{T}_h$.

We first treat the case where $T=S\cap\Gamma_h$ is a triangle $T=BCD$, as illustrated in Fig.~\ref{fig:2.1}.
\begin{figure}[ht!]
\vspace*{0mm}
    \centering
  \resizebox{!}{5.5cm}
    {\includegraphics[ height=60mm]{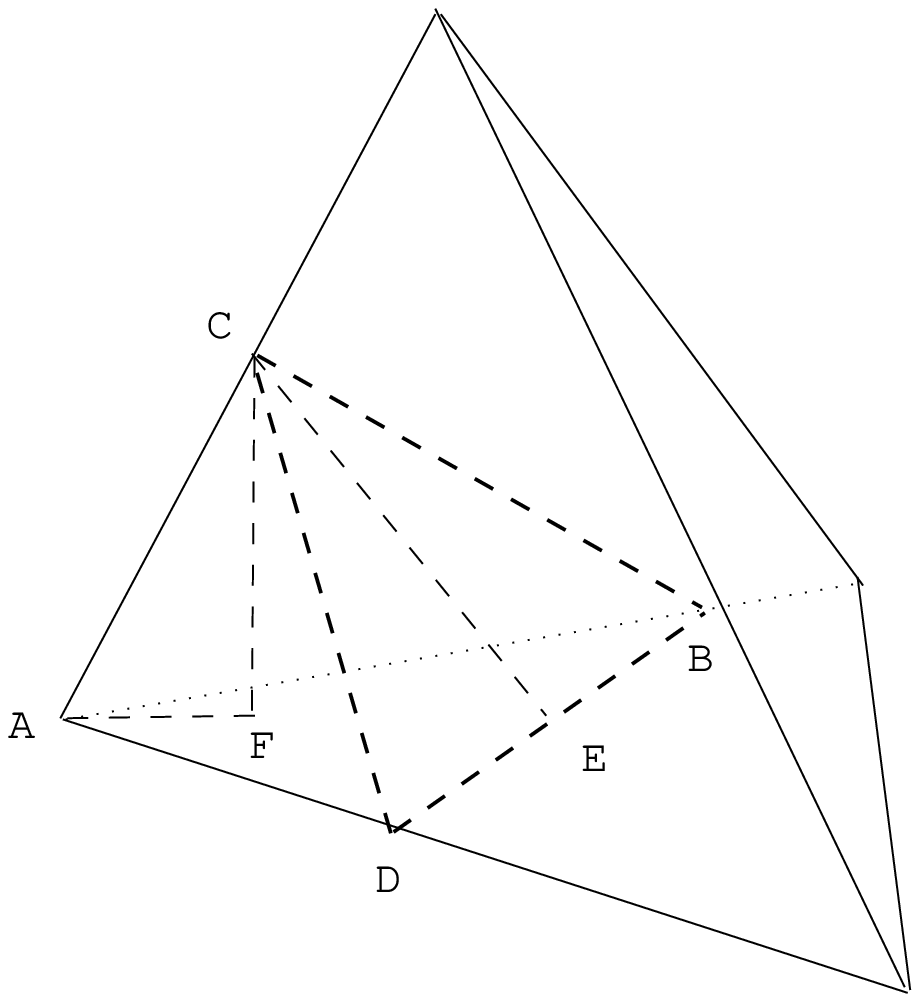}}
    \vspace*{-10mm}
    \caption{ }
   \lbl{fig:2.1}
 \end{figure}
Consider the angle $\phi:=\angle BCD$. Then either $ \phi \leq \pi -\theta_{\min}$ and
\eqref{max_angle} is proved with $\phi_{\min}=\theta_{\min}$ or $\phi \in (\pi -\theta_{\min}, \pi)$. Hence, we treat the latter case.
Note that
\begin{equation*}\frac{|CF|}{|AC|}=\sin(\angle CAF)\geq \sin \theta_{\min} \end{equation*}
and $ \angle BDC < \pi -\phi<\theta_{\min}<\frac{\pi}{2}$.
Take $E$ on the line through $DB$ such that $CE\perp DB$, and $F$ in the plane through $ABD$ such that $CF$ is perpendicular to this plane.
Hence, $|CF|\leq|CE|$ holds.  Using the sine rule we get
\begin{align*}
 \sin(\angle ADC)&=\frac{|AC|}{|CD|}\sin(\angle CAD)\leq \frac{|AC|}{|CD|}\leq \frac{1}{\sin \theta_{\min}} \frac{|CF|}{|CD|}
\leq  \frac{1}{\sin \theta_{\min}} \frac{|CE|}{|CD|} \\ & =  \frac{1}{\sin \theta_{\min}} \sin(\angle BDC) \leq \frac{\sin(\pi-\phi)}{\sin \theta_{\min}}=\frac{\sin(\phi)}{\sin \theta_{\min}} <1.
\end{align*}
Hence, $\angle ADC \leq \arcsin (\frac{\sin \phi}{\sin \theta_{\min}}) \leq 2 \frac{\sin \phi}{\sin \theta_{\min}}$ holds.
This yields
\[ \angle ADB<\angle ADC+\angle CDB\leq  2 \frac{\sin \phi}{\sin \theta_{\min}} + \pi - \phi.\]
With the same arguments we obtain
\[\angle ABD \leq 2 \frac{\sin \phi}{\sin \theta_{\min}} + \pi - \phi. \]
Since $\angle DAB \leq \pi -  \theta_{\min}$  and $\angle DAB  = \pi- (\angle ADB +\angle ABD)$  we  get
\begin{equation} \label{lk}
 \theta_{\min} \leq 4 \frac{\sin \phi}{\sin \theta_{\min}} + 2 \pi - 2 \phi =:g(\phi).
\end{equation}
Since $\phi \in (\pi- \theta_{\min}, \pi) \subset (\frac12 \pi, \pi)$ it suffices to consider $g(\phi)$  for $\phi \in (\frac12 \pi, \pi)$. Elementary computation yields $g(\frac12 \pi) > \theta_{\min}$, $g(\pi)=0$ and $g$ is monotonically decreasing on $(\frac12 \pi, \pi)$. Hence the inequality  \eqref{lk} holds iff $\phi \leq \phi_0$, where $\phi_0 $ is the unique solution in
$(\frac12 \pi, \pi)$ of $g(\phi) = \theta_{\min}$. This proves the result in a).
\medskip

We now consider the case where $T=S\cap\Gamma_h$ is a  quadrilateral $T=ABCD $, as illustrated in Fig.~\ref{fig:2.2}.
\begin{figure}[ht!]
\vspace*{0mm}
    \centering
  \resizebox{!}{5.5cm}
    {\includegraphics[ height=60mm]{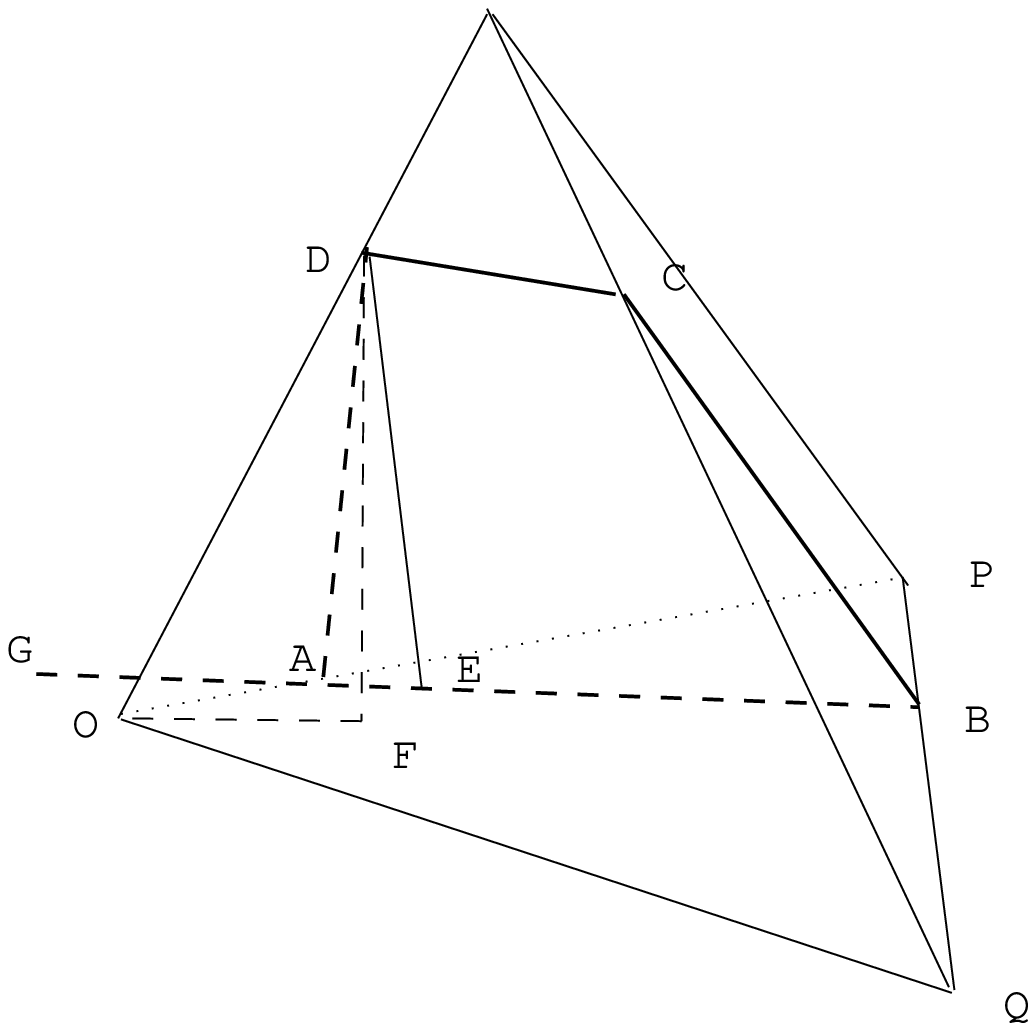}}
    \vspace*{-10mm}
    \caption{ }
   \lbl{fig:2.2}
 \end{figure}
Consider the angle $\phi:=\angle DAB$. Then either $ \phi \in (0 , \theta_{\min})$ or $\phi \in [\theta_{\min}, \pi)$. We only have to  treat the former case.
Take $E$ on the line through $AB$ such that $DE\perp AB$, and $F$ in the plane through $OPQ$ such that $DF$ is perpendicular to this plane.
Hence, $|DF|\leq|DE|$ holds and
\[
 \sin \phi  = \frac{|DE|}{|AD|}. \]
Furthermore, using $
\frac{|DF|}{|OD|}=\sin(\angle DOF)\geq \sin\theta_{\min}$ we get
\begin{align*}
 \sin(\angle OAD)&=\frac{|OD|}{|AD|}\sin(\angle AOD)\leq \frac{|OD|}{|AD|}\leq \frac{1}{\sin\theta_{\min}}  \frac{|DF|}{|AD|}\nonumber\\
&\leq \frac{1}{\sin\theta_{\min}}  \frac{|DE|}{|AD|}= \frac{\sin \phi}{\sin\theta_{\min}} <1.
\end{align*}
This implies
\[ \angle OAD \leq  \arcsin\big( \frac{\sin \phi}{\sin\theta_{\min}} \big) \leq 2 \frac{ \sin \phi}{\sin\theta_{\min}}.
 \]
Hence, since $\angle DAB = \phi \leq 2 \sin \phi$, we obtain
\[ \angle OAB<\angle OAD+\angle DAB \leq \big(1+\frac{1}{\sin\theta_{\min}}\big) 2 \sin \phi .\]
 Using $\angle OAB = \pi - \angle PAB $ and $\angle PAB < \pi - \angle OPQ < \pi - \theta_{\min}$
results in
\begin{equation} \label{lkk}
  \theta_{\min} < \big(1+\frac{1}{\sin\theta_{\min}}\big) 2  \sin \phi.
\end{equation}
For $\phi \in (0,\theta_{\min})$ the inequality  \eqref{lkk} holds iff $\phi \geq \phi_0$, where $\phi_0 $ is the unique solution in $(0,\frac12 \pi)$ of
$\theta_{\min} = \big(1+\frac{1}{\sin\theta_{\min}}\big) 2  \sin \phi_0$. Thus the result in b) holds.\\
\end{proof}

The lemma readily yields the following result.

\begin{theorem}[maximum angle condition] \label{cor1}
Consider a regular family of tetrahedral triangulations $\{ \T_h\}_{h>0}$ and a surface triangulation $\Gamma_h= \cup_{T \in \mathcal{F}_h} T$ that is consistent with $ \T_h$.
Assume that any quadrilateral element $T=S\cap\Gamma_h$, $S\in\mathcal{T}_h$, is divided in two triangles
by  connecting the vertex with largest inner
angle with its opposite vertex.  The resulting surface triangulation satisfies the following \emph{maximal angle condition}.  There exists $\phi_{\min}>0$ depending only on $\alpha$ from
 \eqref{min_angle} such that:
\begin{equation}
\label{max_angle2} 0<\sup_{T\in\mathcal{F}_h}\phi_{i,T}\leq \pi-\phi_{\min}\ \ i=1,2,3,
\end{equation}
 where $\phi_{i,T}$ are the inner angles of the element $T$.
\end{theorem}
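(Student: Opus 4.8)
The plan is to reduce the theorem to the two cases already handled in Lemma \ref{theo:2.1}. If $T = S \cap \Gamma_h$ is a triangular segment, then part a) of the lemma gives the bound \eqref{max_angle} directly, with the same $\phi_{\min}$, and nothing more is needed. The entire content of the theorem therefore lies in controlling the triangles produced when a quadrilateral segment is subdivided. I would first record that such a quadrilateral is convex: it is the intersection of the convex tetrahedron $S$ with a plane, so its inner angles sum to $2\pi$ and each lies in $(0,\pi)$. Part b) of the lemma then supplies the lower bound $\phi_{i,T} \geq \phi_{\min}$ for all four of these angles.

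Next I would fix the subdivision. Label the quadrilateral $ABCD$ (vertices in cyclic order) so that the largest inner angle sits at $A$, and cut along the diagonal $AC$ to its opposite vertex, producing triangles $ABC$ and $ACD$. The point to notice is that the two opposite angles $\angle B$ and $\angle D$ survive unchanged as angles of the two triangles, whereas the angles at $A$ and at $C$ are each split into two smaller pieces by the diagonal.

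For the split pieces the bound is immediate. In triangle $ABC$ the angle at $B$ equals the quadrilateral angle $\angle B \geq \phi_{\min}$ by part b), so the remaining two angles satisfy $\angle BAC + \angle BCA = \pi - \angle B \leq \pi - \phi_{\min}$; since both are positive, each is strictly below $\pi - \phi_{\min}$. The same argument applied to triangle $ACD$, using $\angle D \geq \phi_{\min}$, bounds $\angle DAC$ and $\angle DCA$. It then remains to bound the surviving full angles $\angle B$ and $\angle D$ from above, and this is the only step where the choice of diagonal is used. Since $A$ carries the largest angle, $\angle A \geq \angle B$; combined with $\angle A + \angle B + \angle C + \angle D = 2\pi$ and $\angle C,\angle D \geq \phi_{\min}$ this yields $2\angle B \leq 2\pi - \angle C - \angle D \leq 2\pi - 2\phi_{\min}$, i.e. $\angle B \leq \pi - \phi_{\min}$; symmetrically $\angle D \leq \pi - \phi_{\min}$. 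Collecting the six triangle angles then gives \eqref{max_angle2} with the $\phi_{\min}$ of the lemma.

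I expect the final step — bounding $\angle B$ and $\angle D$ — to be the only genuine obstacle, since these full quadrilateral angles are not controlled from above by the lemma and could a priori be close to $\pi$. The resolution is purely combinatorial: splitting at the vertex of largest angle forces the two opposite angles to be no larger than the average, hence bounded away from $\pi$ by the same margin $\phi_{\min}$ that the lemma guarantees at the other vertices.
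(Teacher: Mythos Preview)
Your proof is correct and follows essentially the same route as the paper: invoke part a) for triangular segments, and for quadrilaterals use part b) to bound the surviving full angle $\theta_j$ in each subtriangle from below (hence the two split angles from above), then use $\theta_4\ge\theta_j$ together with the angle sum $2\pi$ and the lower bound on the remaining two angles to get $\theta_j\le\pi-\phi_{\min}$. Your explicit remark that the planar section of a tetrahedron is a convex quadrilateral (so the inner angles lie in $(0,\pi)$ and sum to $2\pi$) is a helpful clarification that the paper leaves implicit.
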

\begin{proof} If $T=S\cap\Gamma_h$ is a triangle, then \eqref{max_angle2} directly follows from \eqref{max_angle}.
 Let $T=S\cap\Gamma_h$ be a quadrilateral, with its four inner angles denoted  by $\theta_4 \geq \theta_3 \geq \theta_2 \geq \theta_1 > 0$.
 From the result in \eqref{max_angle_b} we have $\theta_i \geq \phi_{\rm min}$ for all $i$.
The vertex with angle $\theta_4$ is connected with the opposite vertex. Let $T_1$ be one of the resulting triangles.
One of the angles of $T_1$ is $\theta_j$ with $j\in\{1,2,3\}$. From $ \theta_j \geq  \phi_{\rm min}$
 it follows that the other two angles are both bounded by $\pi - \phi_{\rm min}$.
Furthermore, from $\theta_j = 2\pi - \theta_4 - \sum_{i=1, i \neq j}^3 \theta_i \leq 2 \pi - \theta_j - 2 \phi_{\rm min}$
 it follows that $\theta_j \leq\pi - \phi_{\rm min}$ holds.
\end{proof}
\ \\[1ex]
In the remainder we assume that quadrilaterals are subdivided in the way as explained in Theorem~\ref{cor1}. Hence, the inner angles in the surface triangulation $\mathcal{F}_h$ are bounded by a constant $\theta^\ast < \pi$ that depends only on the stability (close to $\Gamma$) of the outer tetrahedral triangulation $\mathcal{T}_h$. In particular $\theta^\ast$ is \emph{independent of $h$ and of how  $\Gamma_h$ intersects the outer triangulation $\mathcal{T}_h$}.

\section{Application in a finite element method}\label{app:fem}
In this section, we use  the maximum angle property of the surface triangulation to derive an optimal finite element interpolation result.
On $\mathcal{F}_h$ we consider the space of linear finite element functions:
\begin{equation} \label{defFE}
 V_h=\{v_h \in \mathcal C(\Gamma_h) : v_h \in \mathcal P_1(T)\quad \text{for all}~~ T\in\mathcal F_h \}.
\end{equation}
This finite element space is the same as the one studied by Dziuk in \cite{Dziuk88}, but an important  difference
is that in the approach in  \cite{Dziuk88} the triangulations have to be shape regular. In general, the finite element space $V_h$ is different from the
surface finite element space constructed in \cite{OlshanskiiReusken08,Reusken08}.

Below we derive an approximation result for the finite element space $V_h$. Since the discrete surface  $\Gamma_h$ varies with $h$, we have to explain in which sense $\Gamma_h$ is close to $\Gamma$. For this we use a standard setting applied in the analysis of discretization methods for partial differential equations on surfaces, e.g.~\cite{Demlow06,Dziuk88,Dziuk07,Dziuk011,Reusken08}.

Let $U:=\{\, x \in \R^3~|~{\rm dist}(x,\Gamma) < c\, \}$ be a sufficiently small neighborhood of $\Gamma$. We define $\T_h^\Gamma:=\{ \, T \in  \Th~|~ {\rm meas}_2(T \cap \Gamma_h) > 0\, \}$, i.e., the collection of tetrahedra which intersect
the discrete surface ${\Gamma}_h$, and assume that  $\T_h^\Gamma \subset U$.
Let $d$ be the signed distance function to $\Gamma$, with $d <0$ in the interior of $\Gamma$,
\[
  d: U \rightarrow \R,\qquad  |d(x)|:={\rm dist}(x,\Gamma) \quad\mbox{for all $x \in U$}.
\]
Thus $\Gamma$ is the zero level set of $d$. Note that $\bn_{\Gamma} =\nabla d$ on $\Gamma$. We define $\bn(x):=\nabla d(x)$ for  $x \in U$. Thus $\bn$ is the outward pointing normal on $\Gamma$ and $\|\bn (x)\|=1$ for all $x\in U$. Here and in the remainder    $\|\cdot\|$ denotes the Euclidean norm on $\R^3$.
We introduce a local orthogonal coordinate system by using the  projection $\bp:\, U \rightarrow\Gamma$:
\[
 \bp(x)=x-d(x)\bn(x) \quad \text{for all } x \in U.
\]
We assume that the decomposition $x=\bp(x)+ d(x)\bn(x)$ is unique for all $x \in U$. Note that
$
  \bn(x)= \bn\big(\bp(x)\big)$ for all   $x \in U$.
For a function $v$ on $\Gamma$, its extension is defined as
\begin{equation}
 v^e(x):= v(\bfp(x)), \quad \hbox{for all } x\in U.
\end{equation}
The outward pointing (piecewise constant) unit normal on $\Gamma_h$ is denoted by $\bn_h$.
Using this local coordinate system we introduce the following assumptions on $\Gamma_h$:
\begin{align}
  & \bfp: \Gamma_h \to \Gamma \quad \text{is bijective}, \label{cond1} \\
  & \max_{x \in\Gamma_h} |d(x)| \lesssim  h^2, \label{cond2} \\
  &  \max_{x \in\Gamma_h} \| \bn(x)- \bn_h(x)\| \lesssim h, \label{cond3}
\end{align}
 where $h=\sup_{T\in\T_h^\Gamma}\rho(T)$.
In \eqref{cond2}-\eqref{cond3} we use the common notation, that the inequality holds with a constant independent of $h$. In \eqref{cond3}, only $x\in\Gamma_h$  are considered for which $\bn_h(x)$ is well-defined.
Using these assumptions, the following result is derived in \cite{Dziuk88}.
\begin{lemma}\lbl{lem:norms}
 For any function $u\in H^2(\Gamma)$, we have, for arbitrary $T \in \mathcal{F}_h$ and $\tilde T:=\bfp (T)$:
\begin{align}
\|u^e\|_{0,T} &\simeq  \|u\|_{0,\tilde T}, \\
|u^e|_{1,T} &\simeq  |u|_{1,\tilde T},\\
|u^e|_{2,T} &\lesssim  |u|_{2,\tilde T}+ h|u|_{1,\tilde T}, \label{ress3}
\end{align}
 where $A \simeq B$ means $B\lesssim A\lesssim B$ and the constants in the inequalities are independent of $T$ and of $h$.
\end{lemma}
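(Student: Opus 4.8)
The goal is to prove the three norm equivalences in Lemma~\ref{lem:norms} relating Sobolev norms of a function $u$ on a curved patch $\tilde T = \bfp(T) \subset \Gamma$ to those of its normal extension $u^e$ on the flat triangle $T \in \mathcal F_h$. The plan is to realize the map $\bfp|_T : T \to \tilde T$ as a diffeomorphism and pull back all integrals over $\tilde T$ to integrals over $T$ via the change of variables formula, so that the ratios of norms are controlled by the Jacobian of $\bfp$ and its derivatives. Because $u^e = u \circ \bfp$ by definition, a chain-rule computation expresses $\nabla_{\Gamma_h} u^e$ in terms of $\nabla_\Gamma u$ composed with $\bfp$, multiplied by the tangential derivative of $\bfp$; similarly second derivatives of $u^e$ involve both first and second derivatives of $u$ plus the curvature of $\bfp$. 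This is exactly where the factor $h|u|_{1,\tilde T}$ in \eqref{ress3} enters and why that estimate is one-sided.

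The key geometric inputs come from the assumptions \eqref{cond1}--\eqref{cond3} together with the smoothness of $\Gamma$. First I would record the standard facts about the projection $\bfp(x) = x - d(x)\bn(x)$: its derivative on $U$ is $D\bfp = (I - d\,\bH)(I - \bn\bn^T)$ up to terms, where $\bH = \nabla^2 d$ is the (bounded) Weingarten map, so that on $\Gamma_h$ the quantity $\max_{x\in\Gamma_h}|d(x)| \lesssim h^2$ from \eqref{cond2} makes the factor $I - d\,\bH$ a small perturbation of the identity, uniformly in $h$. The restriction of $D\bfp$ to the tangent plane of $T$ then maps it to the tangent plane of $\Gamma$ at $\bfp(x)$; the deviation of these two tangent planes is controlled by $\|\bn - \bn_h\| \lesssim h$ from \eqref{cond3}. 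Combining these, the surface Jacobian $\mu_h$ (the factor relating the surface measures $\mathrm ds$ on $T$ and on $\tilde T$) satisfies $\mu_h = 1 + O(h)$, and the linear map identifying tangential gradients has singular values bounded above and below by $1 + O(h)$, uniformly in $h$ and independent of how $\Gamma_h$ cuts the tetrahedra.

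With these bounds in hand, the three estimates follow by routine application of the change of variables formula. For the $L^2$ norm, $\|u^e\|_{0,T}^2 = \int_T |u(\bfp(x))|^2\,\mathrm ds = \int_{\tilde T} |u|^2 \mu_h^{-1}\,\mathrm ds \simeq \|u\|_{0,\tilde T}^2$ since $\mu_h = 1 + O(h)$. For the $H^1$ seminorm I would transform $\nabla_{\Gamma_h} u^e = (D\bfp|_T)^T (\nabla_\Gamma u)\circ\bfp$ and use that the transformation matrix has bounded inverse to get the two-sided equivalence. For the $H^2$ seminorm, differentiating once more produces a term with second derivatives of $u$ (giving $|u|_{2,\tilde T}$) and a term in which the curvature of $\bfp$ multiplies first derivatives of $u$; the latter contributes the $h|u|_{1,\tilde T}$ term, and because the curvature term only adds to the upper bound, this estimate is stated as $\lesssim$ rather than $\simeq$.

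The main obstacle — and the reason the maximum angle condition proved in Theorem~\ref{cor1} is needed — is to make all constants genuinely independent of $h$ and of the shape of the triangles $T$, which may be arbitrarily flat. The change of variables arguments above control the geometric distortion induced by $\bfp$ uniformly, but converting between Sobolev seminorms requires that the intrinsic geometry of $T$ itself does not degenerate the norms. For the $L^2$ and first-derivative estimates the transformation $\bfp$ acts essentially as a near-isometry and no shape regularity of $T$ is invoked, so these hold for degenerate triangles as well. The delicate point is the interpolation application that follows: it is precisely the maximum angle bound $\theta^\ast < \pi$ that guarantees standard polynomial interpolation estimates on the flat triangles $T$ remain valid with $h$-independent constants despite the absence of shape regularity, and Lemma~\ref{lem:norms} is the bridge that transports those flat-triangle estimates to the curved surface $\Gamma$.
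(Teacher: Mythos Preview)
The paper does not actually prove this lemma; it simply cites the result from Dziuk~\cite{Dziuk88}. Your outlined approach --- pulling back via the diffeomorphism $\bfp|_T$, controlling the surface Jacobian through $|d|\lesssim h^2$ and $\|\bn-\bn_h\|\lesssim h$, and applying the chain rule for first and second tangential derivatives --- is precisely the standard argument carried out in that reference, and it is correct.

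One remark on your final paragraph: the maximum angle condition plays no role whatsoever in the proof of Lemma~\ref{lem:norms}. All three estimates, including the $H^2$ bound, are independent of the shape of $T$ purely because $\bfp$ is a near-isometry on $\Gamma_h$; no shape information about $T$ enters the change-of-variables argument. The maximum angle condition is invoked only later, in Theorem~\ref{thm2}, to transfer the Babu\v{s}ka--Aziz interpolation estimates on the flat triangles. So while your closing sentences identify the logical role of the lemma correctly, the phrase ``the main obstacle --- and the reason the maximum angle condition \ldots\ is needed'' is misleading as stated: there is no such obstacle in this lemma, and that paragraph is extraneous to the proof.
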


\subsection{Finite element interpolation error}
Based on the results in Lemma~\ref{lem:norms}, the maximum angle property and the approximation results derived in \cite{Babuska76} we easily obtain an optimal bound for the interpolation  error in the space $V_h$.
Consider the standard finite element nodal interpolation  $I_h: C(\Gamma_h) \to V_h$:
\begin{equation}
 (I_h v)(x) = v(x),\quad \text{for all}~~ x \in \mathcal{V},
\end{equation}
with $\mathcal{V}$ the set of vertices of the triangles in $\Gamma_h$.
\begin{theorem} \label{thm2}
For any $u\in H^2(\Gamma)$  we have
\begin{align}
\|u^e- I_h u^e\|_{L^2(\Gamma_h)} &\lesssim h^2 \|u\|_{H^2(\Gamma)}, \label{r1} \\
\|u^e- I_h u^e\|_{H^1(\Gamma_h)} & \lesssim h \|u\|_{H^2(\Gamma)}. \label{r2}
\end{align}
\end{theorem}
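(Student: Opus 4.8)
The plan is to reduce the global bounds \eqref{r1}--\eqref{r2} to element-wise interpolation estimates that are valid under a maximum angle condition, and then to lift these from the planar triangles $T\subset\Gamma_h$ to their curved images $\tilde T=\bfp(T)\subset\Gamma$ using Lemma~\ref{lem:norms}. The key observation is that $I_h$ restricted to a single $T\in\mathcal F_h$ is nothing but ordinary $P_1$ Lagrange interpolation on a \emph{planar} triangle; since $T$ is flat, the surface seminorms on $T$ reduce to the usual in-plane ones, so classical two-dimensional interpolation theory applies elementwise and the only geometric input required is a bound on the largest angle.

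First I would fix $T\in\mathcal F_h$, set $w:=u^e|_T$, and invoke the maximum angle property. By Theorem~\ref{cor1} every inner angle of $T$ is bounded above by a constant $\theta^\ast<\pi$ that is independent of $h$ and of how $\Gamma_h$ cuts $\T_h$. Under such a condition the Babu\v{s}ka--Aziz theory \cite{Babuska76} gives, for all $w\in H^2(T)$,
\begin{equation*}
\|w-I_h w\|_{0,T}\lesssim h_T^2\,|w|_{2,T},\qquad |w-I_h w|_{1,T}\lesssim h_T\,|w|_{2,T},
\end{equation*}
with $h_T\le h$ the diameter of $T$ and with constants that depend on the triangle only through $\theta^\ast$; in particular no lower bound on the angles (no shape regularity) enters, which is exactly what accommodates the possibly degenerate elements of $\mathcal F_h$.

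Next I would combine this with Lemma~\ref{lem:norms}. The seminorm transfer \eqref{ress3} gives $|u^e|_{2,T}\lesssim |u|_{2,\tilde T}+h\,|u|_{1,\tilde T}\lesssim\|u\|_{H^2(\tilde T)}$, so on a single element, using $h_T\le h$,
\begin{equation*}
\|u^e-I_h u^e\|_{0,T}^2\lesssim h^4\,\|u\|_{H^2(\tilde T)}^2,\qquad \|u^e-I_h u^e\|_{H^1(T)}^2\lesssim h^2\,\|u\|_{H^2(\tilde T)}^2,
\end{equation*}
the full $H^1(T)$ bound absorbing the higher-order $L^2$ term. Since $\bfp$ is bijective by \eqref{cond1}, the images $\{\tilde T\}_{T\in\mathcal F_h}$ form a non-overlapping partition of $\Gamma$, whence $\sum_T\|u\|_{H^2(\tilde T)}^2=\|u\|_{H^2(\Gamma)}^2$; summing the two element bounds and taking square roots yields \eqref{r1} and \eqref{r2}.

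I expect the only genuinely delicate point to be the first step, namely that the Babu\v{s}ka--Aziz constant depends on each triangle \emph{solely} through the maximum angle $\theta^\ast$, and not through its smallest angle or its size relative to neighbours. This is precisely what Theorem~\ref{cor1} supplies, and it is what decouples the interpolation analysis from the strong shape irregularity of $\mathcal F_h$; granted this, the remaining scaling via Lemma~\ref{lem:norms} and the summation over elements are routine.
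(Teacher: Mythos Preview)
Your proposal is correct and follows essentially the same route as the paper: element-wise interpolation bounds on the planar triangles $T$ (standard theory for the $L^2$ estimate, the Babu\v{s}ka--Aziz maximum angle result from \cite{Babuska76} for the $H^1$ estimate), transfer of $|u^e|_{2,T}$ to norms on $\tilde T\subset\Gamma$ via Lemma~\ref{lem:norms}, and summation over $T\in\mathcal F_h$. Your explicit use of the bijectivity of $\bfp$ in \eqref{cond1} to justify $\sum_T\|u\|_{H^2(\tilde T)}^2=\|u\|_{H^2(\Gamma)}^2$ is a detail the paper leaves implicit.
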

\begin{proof}
 From standard interpolation theory we have
\[
  \|u^e- I_h u^e\|_{L^2(T)} \lesssim h^2 |u^e|_{2,T},
\]
where the constant in the upper bound is independent of (the shape of) $T$.
Using the result in \eqref{ress3} and summing over $T \in \mathcal{F}$ proves the result \eqref{r1}.
For the interpolation error bound in the $H^1$-norm we use the results from \cite{Babuska76}. For the interpolation error bounds derived in that paper the maximum angle property is essential.
 From \cite{Babuska76} we get
\begin{equation*}
 \|u^e- I_h u^e\|_{H^1(T)}\lesssim h \|u\|_{H^2(T)}.
\end{equation*}
Due to the maximum angle property the constant in the upper bound is independent of $T$.
Using the results in Lemma~\ref{lem:norms} and summing over $T\in\mathcal{F}_h$ we obtain the result \eqref{r2}.
\end{proof}
\ \\[1ex]
If one considers an $H^1(\Gamma)$ elliptic partial differential equation on $\Gamma$, the error for its finite element discretization in the surface space $V_h$ can be analyzed along the same lines as in \cite{Dziuk88}.  A difference with  the planar case is that geometric errors arise due to the approximation of $\Gamma$ by $\Gamma_h$. Using the interpolation error bounds in Theorem~\ref{thm2} and bounding the geometric errors, with the help of the assumptions \eqref{cond1}-\eqref{cond3}, results in optimal order discretization error bounds.

\subsection{Conditioning of the mass matrix}\label{s_cond}
Clearly the (strong) shape irregularity of the surface triangulation will influence the conditioning of the mass and stiffness matrices. Let $N$ be the number of vertices in the surface triangulation and $\{\phi_i\}_{i=1}^{N}$  the nodal basis  of the finite element space $V_h$. The mass and stiffness matrices are given by
\begin{align}
 \mathbf{M} & =(m_{ij})_{i,j=1}^N,\quad \hbox{ with }\quad m_{ij}=\int_{\Gamma_h}\phi_i\phi_j\, ds, \\
\mathbf{A} &=(a_{ij})_{i,j=1}^N,\quad \hbox{ with }\quad a_{ij}=\int_{\Gamma_h}\nabla_{\Gamma_h}\phi_i\nabla_{\Gamma_h}\phi_j\, ds.
\end{align}
We also need their scaled versions.
Let $\mathbf{D}_M$ and $\mathbf{D}_A$ be the diagonals of $\mathbf{M}$ and $\mathbf{A}$, respectively. The scaled matrices are denoted by
\begin{equation} \label{scaledmat}
\mathbf{M}^s=\mathbf{D}_M^{-\frac{1}{2}}\mathbf{M}\mathbf{D}_M^{-\frac{1}{2}}, \quad \mathbf{A}^s=\mathbf{D}_A^{-\frac{1}{2}}\mathbf{A}\mathbf{D}_A^{-\frac{1}{2}}.
\end{equation}
 From a simple scaling argument it follows that the spectral condition number of $\mathbf{M}^s$ is bounded uniformly in $h$ and in the shape (ir)regularity of the surface triangulation. For completeness we include a proof.
\begin{theorem} \label{thmmass}
 The following holds:
\[
 \frac{2}{\sqrt{2}+2} \leq \frac{\langle \mathbf{M} \bv, \bv\rangle}{\langle \mathbf{D}_M \bv, \bv\rangle} \leq 4 \quad \text{for all}~~ \bv \in \Bbb{R}^N, ~ \bv \neq 0.
\]
\end{theorem}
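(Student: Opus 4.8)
The plan is to exploit the element-wise structure of the mass matrix together with the fact that the quotient $\langle\mathbf{M}\bv,\bv\rangle/\langle\mathbf{D}_M\bv,\bv\rangle$ is invariant under rescaling each element's area; this invariance is precisely what produces a bound independent of $h$ and of the (possibly severe) shape irregularity of $\mathcal{F}_h$. Writing $v_h=\sum_i v_i\phi_i$ and localizing over triangles, I would use that on a single $T\in\mathcal{F}_h$ with area $|T|$ and vertex values $v_{T,1},v_{T,2},v_{T,3}$ the exact $P_1$ mass matrix gives
\begin{equation*}
\int_T v_h^2\,ds=\frac{|T|}{12}\Big(\sigma_T^2+\sum_{k=1}^3 v_{T,k}^2\Big),\qquad \sigma_T:=\sum_{k=1}^3 v_{T,k}.
\end{equation*}
Since the diagonal assembles as $m_{ii}=\sum_{T\ni i}\tfrac{|T|}{6}$, the diagonal form localizes to $\langle\mathbf{D}_M\bv,\bv\rangle=\sum_{T\in\mathcal{F}_h}\tfrac{|T|}{6}\sum_{k=1}^3 v_{T,k}^2$. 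The key observation is that $|T|$ factors out of \emph{both} forms, so the quotient is blind to the wildly varying element sizes near $\Gamma$; this is the ``scaling'' that yields a mesh-independent constant.

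Combining these identities gives the central algebraic relation
\begin{equation*}
\langle\mathbf{M}\bv,\bv\rangle=\tfrac12\langle\mathbf{D}_M\bv,\bv\rangle+\sum_{T\in\mathcal{F}_h}\frac{|T|}{12}\,\sigma_T^2 .
\end{equation*}
For the upper bound I would apply the elementary inequality $\sigma_T^2\le 3\sum_k v_{T,k}^2$ on each triangle and sum, obtaining $\sum_T\tfrac{|T|}{12}\sigma_T^2\le\tfrac32\langle\mathbf{D}_M\bv,\bv\rangle$ and hence $\langle\mathbf{M}\bv,\bv\rangle\le 2\langle\mathbf{D}_M\bv,\bv\rangle$. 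This already delivers the stated bound $\le 4$ with substantial slack, so the upper estimate is the routine part.

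The lower bound is where the work lies. Dropping the nonnegative remainder gives only $\langle\mathbf{M}\bv,\bv\rangle\ge\tfrac12\langle\mathbf{D}_M\bv,\bv\rangle$, which reproduces the \emph{weaker} constant $\tfrac12$ and therefore does not reach $\frac{2}{\sqrt2+2}=2-\sqrt2$. To obtain the sharper value the remainder $\sum_T\tfrac{|T|}{12}\sigma_T^2$ must be controlled \emph{from below} by $\langle\mathbf{D}_M\bv,\bv\rangle$, i.e. one needs a uniform discrete inequality of the shape $\sum_T|T|\,\sigma_T^2\ge c\sum_T|T|\sum_k v_{T,k}^2$ with $c$ depending only on the stability constant $\alpha$. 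I would attack this through the local map $\bv\mapsto(\sigma_T)_{T\ni i}$ on the star of each vertex, using that $\Gamma_h$ is a closed connected surface so every vertex carries several incident triangles, and then identify the best constant via a small local eigenvalue problem on each vertex patch, whose worst-case value is expected to fix $c$ and produce the final $2-\sqrt2$.

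The main obstacle, and the step demanding the most care, is precisely this passage from the element level to the assembled form: an element-by-element estimate cannot by itself beat $\tfrac12$, since the remainder is controlled by the \emph{global} vertex-sum functionals $\sigma_T$, which couple neighbouring elements. I therefore expect the crux to be proving that these functionals span enough directions that they cannot all be small while $\bv$ still carries appreciable diagonal mass, and that the worst admissible local configuration—rather than the assembly as a whole—determines $\lambda_{\min}(\mathbf{M}^s)$, so that the constant is not degraded below $2-\sqrt2$ uniformly in $h$.
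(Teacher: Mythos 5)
Your completed steps are correct, and they are in fact a cleaner, sharper version of the paper's own argument: the paper localizes $\int_T v_h^2\,ds$ by edge-midpoint quadrature and proves the elementwise bounds $\frac{|T|}{6(\sqrt{2}+2)}\sum_k v_{T,k}^2 \le \int_T v_h^2\,ds \le \frac{|T|}{3}\sum_k v_{T,k}^2$, the lower one via a sign argument ($v_1v_2\ge 0$ WLOG) combined with $|v_2v_3+v_3v_1|\le \frac{1}{\sqrt{2}}\sum_k v_k^2$, and then assembles over triangles exactly as you do. Your identity $\int_T v_h^2\,ds=\frac{|T|}{12}\bigl(\sigma_T^2+\sum_k v_{T,k}^2\bigr)$ makes both elementwise bounds trivial and \emph{sharp}: the generalized eigenvalues of the element mass matrix against its diagonal are $\{2,\frac12,\frac12\}$, so the paper's elementwise lower constant $1-\frac{1}{\sqrt{2}}$ is suboptimal next to your $\frac12$. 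The genuine gap is your final step, and it cannot be repaired as planned: no inequality $\sum_T |T|\,\sigma_T^2 \ge c\sum_T |T|\sum_k v_{T,k}^2$ with $c>0$ holds on closed surface triangulations. Whenever the vertices admit a proper $3$-coloring, assigning the values $1,-1,0$ to the three color classes gives $\sigma_T=0$ on \emph{every} triangle with $\bv\neq 0$; the regular octahedron (values $1,-1,0$ on the three antipodal vertex pairs) is an explicit example, and large structured patches of fine meshes behave the same way. For such $\bv$ one has $\langle \mathbf{M}\bv,\bv\rangle=\frac12\langle \mathbf{D}_M\bv,\bv\rangle$ exactly, so your constant $\frac12$ is sharp and the vertex-patch eigenvalue problems you propose have minimal eigenvalue zero.

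The reason you cannot reach $\frac{2}{\sqrt{2}+2}=2-\sqrt{2}>\frac12$ is not a weakness of your method but a normalization slip in the paper: in \eqref{p3} the paper uses $\int_T\phi_i^2\,ds=\frac{|T|}{12}$, whereas the correct value is $\frac{|T|}{6}$ (your $m_{ii}=\sum_{T\ni i}|T|/6$ is the right one), so the quadratic form the paper compares against is \emph{half} of the true $\langle\mathbf{D}_M\bv,\bv\rangle$. Relative to that halved normalization your sharp sandwich $[\frac12,2]$ reads $[1,4]$, which comfortably contains the paper's claimed interval and shows its inequalities hold in that normalization; relative to the actual diagonal of $\mathbf{M}$, as $\mathbf{D}_M$ is defined in the statement, the correct sharp bounds are $\frac12\le \langle\mathbf{M}\bv,\bv\rangle/\langle\mathbf{D}_M\bv,\bv\rangle\le 2$, and the stated lower constant $2-\sqrt{2}$ is falsified by the octahedron mode above. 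So you should stop at $\frac12$ and $2$: your identity already proves the theorem's substance (uniform, shape-independent conditioning of $\mathbf{M}^s$) with the best possible constants, and the extra global argument you anticipate is both impossible and unnecessary.
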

\begin{proof} The set of all vertices in $\mathcal{F}_h$ is denoted by $\mathcal{V}=\{\, \xi_i~|~1 \leq i \leq N\,\}$. Let $\bv \in\Bbb{R}^N$ and $v_h \in V_h$ be related by $v_h= \sum_{i=1}^N v_i \phi_i$, i.e., $v_i=v_h(\xi_i)$. Consider a triangle $T \in \mathcal{F}_h$ and let its three vertices be denoted by $\xi_1,\xi_2,\xi_3$. Using quadrature we obtain
\[ \begin{split}
   \int_{T} v_h(s)^2 \, ds  &= \frac{|T|}{3} \big( \frac14 ( v_1+v_2)^2 +  \frac14 ( v_2+v_3)^2 +  \frac14 ( v_3+v_1)^2 \big) \\
 & = \frac{|T|}{6} \big( v_1^2 +v_2^2 +v_3^2 +v_1v_2+v_2v_3+v_3v_1\big).
   \end{split}
\]
Hence, $\int_{T} v_h(s)^2 \, ds \leq \frac{|T|}{3} \sum_{i=1}^3 v_i^2$ holds.
 From a sign argument it follows that at least one of the three terms $v_1v_2$, $v_2v_3$ or $v_3v_1$ must be positive.
 Without loss of generality we can assume $v_1v_2 \geq 0$. Using $|v_2v_3+v_3v_1| \leq \frac{1}{\sqrt{2}} \big( v_1^2 +v_2^2 +v_3^2\big)$ we get
\[
 \int_{T} v_h(s)^2\, ds \geq \frac{|T|}{6} \big( v_1^2 +v_2^2 +v_3^2 - \frac{1}{\sqrt{2}} ( v_1^2 +v_2^2 +v_3^2) \big) = \frac{|T|}{6(\sqrt{2}+2)}\big( v_1^2 +v_2^2 +v_3^2 \big).
\]
Note that $\langle \mathbf{M} \bv, \bv\rangle = \int_{\Gamma_h} v_h(s)^2 \, ds = \sum_{T \in \mathcal{F}_h} \int_{T} v_h(s)^2 \, ds$,
and thus we obtain, with $\mathcal{V}(T)$ the set of the three vertices of $T$,
\begin{equation}\label{p1}
 \frac{2}{\sqrt{2}+2} \frac{1}{12} \sum_{T\in \mathcal{F}_h}|T| \sum_{\xi \in \mathcal{V}(T)} v_h(\xi)^2 \leq \langle \mathbf{M} \bv, \bv\rangle \leq 4 \frac{1}{12} \sum_{T\in \mathcal{F}_h}|T| \sum_{\xi \in \mathcal{V}(T)} v_h(\xi)^2.
\end{equation}
We observe that
\begin{equation} \label{p2}
 \frac{1}{12} \sum_{T\in \mathcal{F}_h}|T| \sum_{\xi \in \mathcal{V}(T)} v_h(\xi)^2 = \frac{1}{12} \sum_{i=1}^N |{\rm supp}(\phi_i)| v_i^2
\end{equation}
holds. From the definition of $ \mathbf{D}_M$ it  follows that
\begin{equation} \label{p3}
 \begin{split}
  \langle \mathbf{D}_M \bv, \bv\rangle & = \sum_{i=1}^N \int_{\Gamma_h} \phi_i^2 \, ds \, v_i^2 = \sum_{i=1}^N v_i^2 \sum_{T \in {\rm supp}(\phi_i)} \int_T \phi_i^2 \, ds  \\
 & = \sum_{i=1}^N  v_i^2 \sum_{T \in {\rm supp}(\phi_i)} \frac{|T|}{12}   = \frac{1}{12} \sum_{i=1}^N  |{\rm supp}(\phi_i)| v_i^2.
 \end{split}
\end{equation}
Combination of the results in \eqref{p1}, \eqref{p2} and \eqref{p3} completes the proof.
\end{proof}

\subsection{Conditioning of the stiffness matrix}\label{s_condS}
We finally address the issue of conditioning of the diagonally scaled stiffness matrix  $\mathbf{A}^s$, cf.~\eqref{scaledmat}.
This matrix has a one dimensional kernel due to the constant nodal mode.
Thus, we  consider the effective condition number $\cond(\mathbf{A}^s)=\lambda_{\max}(\mathbf{A}^s)/\lambda_2(\mathbf{A}^s)$, where $\lambda_2$ is the minimal
nonzero eigenvalue.
We shall argue below that the condition number of $\mathbf{A}^s$ can not be bounded in general by a constant
dependent exclusively  on $\mathcal{T}_h$, but not on $\Gamma_h$.
Indeed, assume a smooth closed surface $\Gamma$, with $|\Gamma|=1$, and a smooth function $u$ defined on $\Gamma$, such that
$\|\nabla_\Gamma u\|_{L^2(\Gamma)}=\|u\|_{H^2(\Gamma)}=1$. Let $\Gamma_h$ be the zero level of the piecewise linear Lagrange interpolant of the
signed distance function to $\Gamma$.  Denote $u_h=I_h u^e$, as in  Theorem~\ref{thm2}, and $\bv=(v_1,\dots,v_N)^T$ is the corresponding vector of nodal values. From the result in \eqref{r2} we obtain
\begin{equation}\label{aux1}
\langle \mathbf{A} \bv, \bv\rangle=\|\nabla_{\Gamma_h} u_h\|_{L^2(\Gamma_h)}=1+O(h).
\end{equation}
On the other hand, if there is a node $\xi$ in the volume triangulation  $\mathcal{T}_h$ such that $\mbox{dist}(\xi,\Gamma_h)<\eps\ll1$, then there can appear a triangle in $\mathcal{F}_h$ with a minimal
angle of $O(\eps)$. This implies that there is a diagonal element in $\mathbf{A}$ of order $O(\eps^{-1})$.
Without lost of generality we may assume $A_{11}=O(\eps^{-1})$ and $v_1=1$. Thus we get
\begin{equation}\label{aux2}
\langle \mathbf{D}_A \bv, \bv\rangle\ge A_{11} v_1^2=O(\eps^{-1}).
\end{equation}
Comparing \eqref{aux1} and \eqref{aux2} we conclude that  $\cond(\mathbf{A}^s)\ge O(\eps^{-1})$, with $\eps\to0$. Results of numerical experiments in the next section demonstrate that the blow up of $\cond(\mathbf{A}^s)$
can be seen in some cases.

One might also be interested in a more general dependence of the eigenvalues of
$\mathbf{A}^s$  on the distribution of tetrahedral nodes in $\mathcal{T}_h$ in a neighborhood of $\Gamma_h$.
To a certain extend this question is addressed in \cite{OlshanskiiReusken08}.


\section{Numerical experiment} \label{secExp}
In this section we present a few results of numerical experiments which illustrate the interpolation estimates from Theorem~\ref{thm2} and the conditioning of mass and stiffness matrices.
Assume the surface $\Gamma$, which is the unit sphere $\Gamma=\{ \, x\in \mathbb{R}^3~|~ \|x\|=1\, \}$, is
embedded in the bulk domain  $\Omega=[-2,2]^3$.
The  signed distance function to $\Gamma$ is denoted by $d$.
We construct a hierarchy of uniform tetrahedral triangulations $\{\mathcal{T}_h\}$ for $\Omega$, with $h\in\{1/2,1/4,1/8,1/16,1/32\}$.
Let $d_h$ be the piecewise nodal Lagrangian interpolant  of $d$. The triangulated surface  is given by
\[
\Gamma_h= \bigcup_{T \in \mathcal{F}_h}T = \{ \, x\in \Omega~|~d_h(x)=0\, \}.
\]
The corresponding finite element space $V_h$ consists of all piecewise affine functions with respect to 
$\mathcal{F}_h$, as defined in \eqref{defFE}.
For $h\in\{1/2,1/4,1/8,1/16,1/32\}$, the resulting dimensions of $V_h$ are
$N=164,812,3500,14264,57632$, respectively. In agreement with the 2D nature of $\Gamma_h$, we have $N\sim h^{-2}$.

To illustrate the result of Theorem~\ref{thm2}, we present   the interpolation errors $\|u^e-I_h u^e\|_{L^2(\Gamma_h)}$ and $|u^e-I_h u^e|_{1,\Gamma_h}$ for the smooth function
$$
u(x)=\frac{1}{\pi}x_1 x_2\arctan({2x_3})
$$
defined on the unit sphere, with $x=(x_1,x_2,x_3)^T$. The dependence of the interpolation errors
on the number of  degrees of freedom $N$ is shown in Figure~\ref{fig:err} (left). We observe the optimal
error reduction behavior, consistent with the estimates in \eqref{r1}, \eqref{r2}.
\begin{figure}
    \centering
    \includegraphics[width=0.46\textwidth]{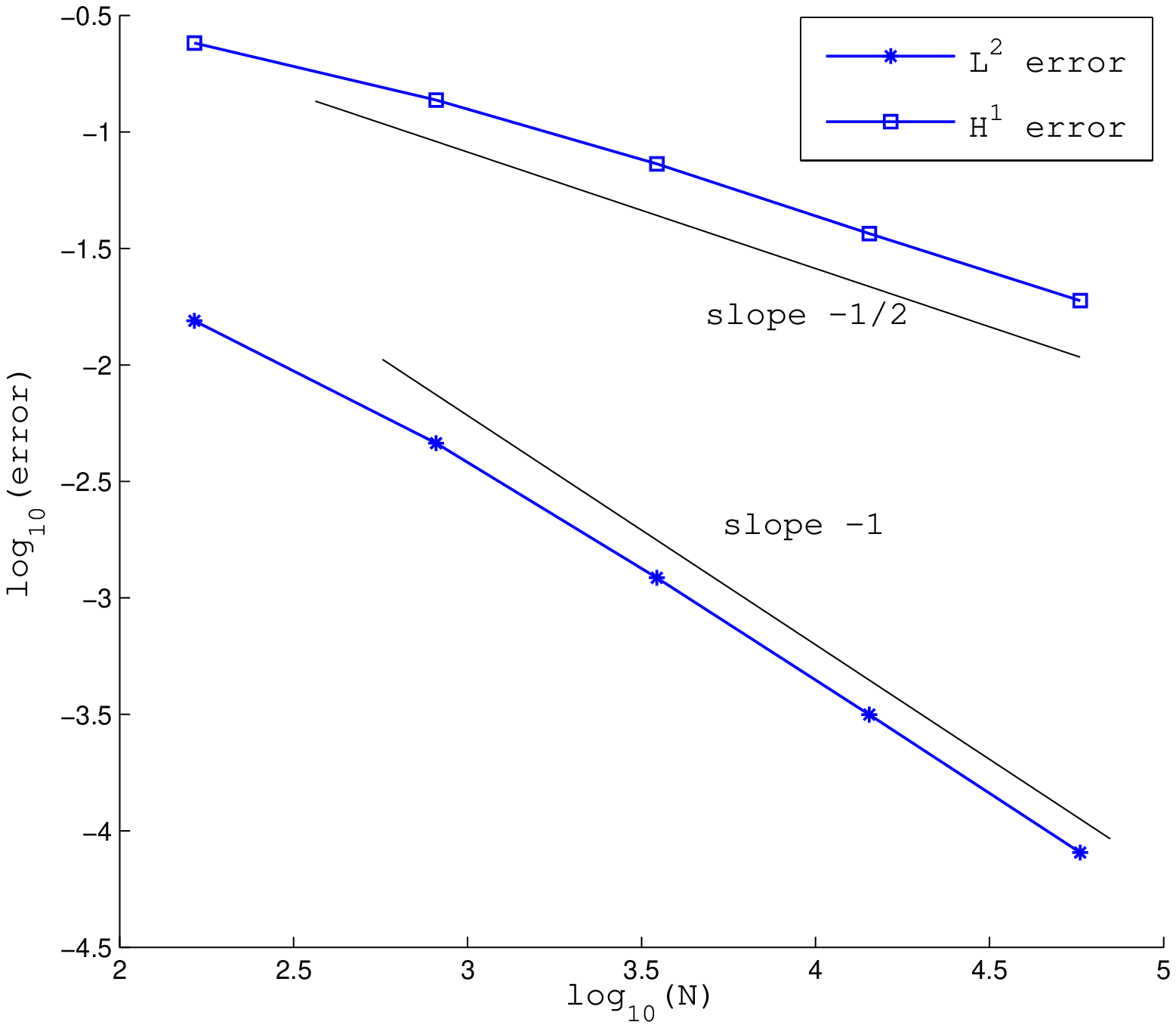}\hfill \includegraphics[width=0.5\textwidth]{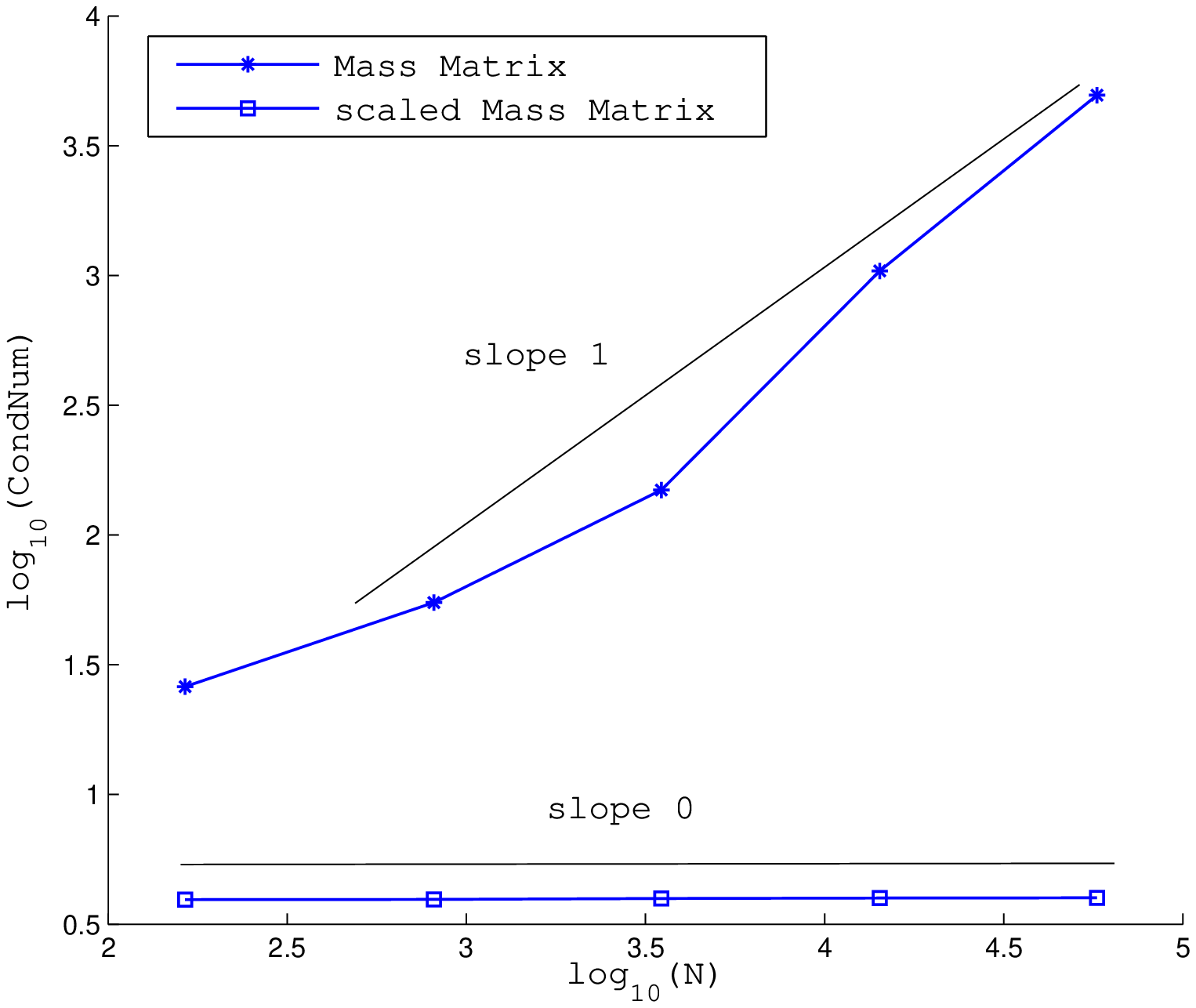}
    \caption{Left: Interpolation error as a function of \# d.o.f.; Right: The condition number of the mass matrix as a function of of \# d.o.f.}
   \lbl{fig:err}
 \end{figure}

Further, for the same sequence of meshes we compute the spectral  condition numbers  of the mass  matrix $\mathbf{M}$ and the diagonally scaled mass matrix $\mathbf{M}^s$.
The dependence of  the condition numbers on the number of  degrees of freedom $N$ is illustrated in Figure~\ref{fig:err} (right).
As was proved in  Theorem~\ref{thmmass}, the scaled mass matrix has a uniformly bounded condition number.

We discussed in section~\ref{s_condS} that concerning the effective condition number of the scaled stiffness matrix the situation  is
more delicate. To illustrate this,  we performed  an experiment in which the intersection between a fixed outer triangulation  and the surface is varied.
Let $\Gamma$ be the boundary of the unit sphere   with the center located in $(0,0,z_c)$.
The discrete surface $\Gamma_h$ is defined as described above,  induced by the uniform outer triangulation. We choose a fixed outer triangulation with $h=1/16$.  We now consider different values for $z_c$, thus ``moving the surface through the outer triangulation''.
The $z_c$ values are given in the first column of Table~\ref{tab:00}. Note that for the largest shift $z_c=0.03$ we have $z_c \approx  0.5 h$. 

For the different surface triangulations we computed the interpolation errors  as described above. It turns out that for the values $z_c \neq 0$
 the error behavior is essentially the same as that for $z_c=0$ (illustrated in Fig.~\ref{fig:err}). 
 
 In the second to fourth columns of Table~\ref{tab:00} two geometry related quantities are given. The second column shows the value of
 the maximum angle occuring in the surface triangulation. Consistent to the theory, cf. Theorem~\ref{cor1}, the maximum angle is bounded
away from $180^\circ$. Small angles, however, can occur. In the third and fourth column we show the value of the minimum angle
and  the number of triangles in the surface triangulation with the smallest angle smaller than $1^\circ$. 
As expected, both the minimal angle and this number of small angles strongly varies depending on $z_c$. For $z_c=0$ the smallest angle
 in the surface triangulation has value $\phi_{\rm min}=1.85^\circ$. Extremely small angles can occur, e.g. for $z_c=0.00005$, we have
$\phi_{\rm min}=8.54$e-$7^\circ$.
 The dimension and the effective condition number of the scaled stiffness matrix  $\mathbf{A}^s$
are given in the fifth and sixth column of Table~\ref{tab:00}. The values of the condition number  show a strong dependence
on the sphere location (value of $z_c$). These large condition numbers indicate that linear systems with these matrices may be
 hard to solve using an iterative method. To investigate this further, we used the standard PCG MATLAB solver with ILU(0) preconditioner.
For given $\bv$, we computed $\mathbf{b}=\mathbf{A}^s \bv$ and  applied  the MATLAB PCG iterative solver with a relative residual tolerance of $10^{-8}$.
The resulting iteration numbers are given in column 7 of Table~\ref{tab:00}. These iteration counts are ``high'' compared to the ones that are generally
 needed for standard discretization of diffusion problems.
To make this more quantitative, we constructed a reference matrix $\bA^{\rm ref}$ as follows: $ \bA^{\rm ref} ={\rm blocktridiag}(-\bB^T,\bD,-\bB)$,
 with $\bD={\rm tridiag}(-1,6,-1)$, $\bB={\rm tridiag}(0,1,1)$. In most rows the matrix $\bA^{\rm ref}$ has 7 nonzero entries, which is approximately
 the same as the average number of nonzero entries per row in the matrix $\mathbf{A}^s$ used in the experiment.
 In $ \bA^{\rm ref}$ we use 120 blockrows and blockcolums and the matrices $\bD$ and $\bB$ have dimension 120.
Then the matrix $\bA^{\rm ref}$ has dimension 14400, which is comparable to the dimension of $\mathbf{A}^s$ used
 in the experiment, cf. Table~\ref{tab:00}. The same iterative solver with the same stopping criterion applied to a
 linear system with  $ \bA^{\rm ref}$ resulted in 42 PCG iterations, which is much lower than the
iteration numbers listed in Table~\ref{tab:00}. \\
In view of these observations, and the fact that solving a PDE on a surface (in 3D) is a \emph{two}-dimensional
problem, it is better to use a direct solver. We performed experiments with the MATLAB sparse direct solver $\bA^s \setminus \mathbf{b}$.
 We measured computing time by the MATLAB function {\sc cputime}. For the system with the reference matrix $ \bA^{\rm ref}$ we obtained
(on our machine) {\sc cputime}$=1.38$. For the matrix $\mathbf{A}^s$ we obtained CPU time measurements given in the last column of Table~\ref{tab:00}.
 These show that for the direct MATLAB solver the matrices  $\mathbf{A}^s$ are not (much) more difficult to deal with than the reference matrix $  \bA^{\rm ref}$.  Variations in CPU times are probably caused by slightly different fill-in properties of matrices for different grids.
The one dimensional kernel of the matrix $\mathbf{A}^s$ did not cause difficulties for the solver.
We checked the  accuracy of the computed solution (in the energy norm) and this was satisfactory.
\begin{table}[ht!]\small
\centering
\begin{tabular}[l]{|l|c|c|c||c|c|c|c|}
 $z_c$ &{$\phi_{\rm max}$}&{$\phi_{\rm min}$} &{$ \# T : \atop \phi_{\rm min} <1^\circ  $} & {\rm dim}$(\mathbf{A}^s)$ & $\cond(\mathbf{A}^s)$ & $\#~$PCG & {\sc cputime} \\[0.8ex]
\hline
 0.03  & $147.4^\circ$ & $0.050^\circ$ & 420  &14406  &1.82e+4 &245   & 3.64  \\
 0.02  & $145.3^\circ$ & $0.027^\circ$ &292  &14376  &2.20e+4 &282   & 3.52  \\
 0.008  & $145.4^\circ$ &$0.014^\circ$ & 270  &14368  &3.44e+4 &331   & 3.61  \\
0.002   & $144.3^\circ$ &$0.002^\circ$ &126  &14300 &1.94e+5 &285  & 2.33\\
 0.0005 & $141.0^\circ$ &1.22e-4$^\circ$ & 20 &14288 &3.07e+6 &259  &  1.93\\
 0.00025& $140.4^\circ$ &3.05e-5$^\circ$ & 20  &14288 &1.23e+7 &191  &  2.22 \\
 0.00005& $139.9^\circ$ &8.54e-7$^\circ$ & 24  &14288 &3.06e+8 &202  & 1.43 \\
    0   & $139.8^\circ$ &$1.85^\circ$ & 0   &14264 &9.14e+3 &142  & 2.85 \\
\hline
\end{tabular}
\caption{Angles in the surface triangulation, dimension of $\mathbf{A}^s$,  $\cond(\mathbf{A}^s)$, iteration count for PCG and timing for direct solver. \label{tab:00}}
\end{table}

\section{Conclusions} The main new result of this paper is a geometric property of the piecewise planar surface which is the zero level of a continuous piecewise affine level set function. If this piecewise planar surface is consistent with an outer tetrahedral triangulation that satisfies the \emph{minimum angle condition}, then after a suitable subdivision of the quadrilaterals into two triangles the resulting surface triangulation satisfies a \emph{maximum angle condition}. This maximum angle property of the surface triangulation is used to derive optimal error bounds for the nodal interpolation operator in the finite element space of continuous piecewise linear functions on the surface triangulation. This implies that the discretization of a surface diffusion PDE in this finite element space results in optimal discretization error bounds. We study the conditioning of the scaled mass and stiffness matrices corresponding to this finite element space. The condition number of the scaled mass matrix is shown to be uniformly bounded. The scaled stiffness matrix can have a very large effective condition number. Results of a numerical experiment indicate that for solving systems with the scaled stiffness matrix it is better to use a sparse direct solver rather than an iterative solver. A topic that we plan to investigate further is whether some grid smoothing (elimination of extremely small angles) can be developed such that the optimal approximation property still holds and the conditioning of the  scaled stiffness matrix is improved.

\section*{Acknowledgments}
The authors thank the referees for their comments,  which have led to significant improvements of the original version of this paper.
This work has been supported in part by the DFG  through grant RE1461/4-1 and the Russian Foundation for  Basic Research through grants 12-01-91330, 12-01-00283.

\bibliographystyle{abbrv}
\bibliography{literatur}
\end{document}